\def\C{\mathcal {C}}
\def\CC{\mathbb{C}}
\def\N{\mathbb{ N}}
\def\O{\mathcal{ O}}
\def\i{\mathrm{i}}
\def\e{\mathrm{e}}
\def\d{\mathrm{d}}
\newtheorem{theorem}{Theorem}
\newtheorem{proposition}[theorem]{Proposition}
\theoremstyle{remark}
\newtheorem{remark}[theorem]{Remark}
\begin{document}
\title[Numerical integration of functions of a rapidly rotating
phase]%
{Numerical integration of functions \\of a rapidly rotating phase}
\date{\today}

\author{Haidar Mohamad}
\author{Marcel Oliver}
\address{School of Engineering and Science \\
 Jacobs University \\
 28759 Bremen \\
 Germany}

\keywords{Oscillatory integrals, quadrature, Gauss quadrature for
sums, Gram polynomials}
\subjclass[2010]{Primary 65D30; Secondary 65D32, 33C45}

\begin{abstract}
We present an algorithm for the efficient numerical evaluation of
integrals of the form
\[
  I(\omega) = \int_0^1 F( x,\e^{\i \omega x}; \omega) \, \d x
\]
for sufficiently smooth but otherwise arbitrary $F$ and
$\omega \gg 1$.  The method is entirely ``black-box'', i.e., does not
require the explicit computation of moment integrals or other
pre-computations involving $F$.  Its performance is uniform in the
frequency $\omega$.  We prove that the method converges exponentially
with respect to its order when $F$ is analytic and give a numerical
demonstration of its error characteristics.
\end{abstract}
\maketitle

\section{Introduction}

We consider the problem of numerical approximation of integrals of the
form 
\begin{equation}
  \label{MainProb}
  I(\omega) = \int_0^1 F( x,\e^{\i \omega x}; \omega) \, \d x \,, 
\end{equation}
where $F \colon [0,1] \times \mathbb U \to \CC$, $\mathbb U$ denotes
the unit circle in the complex plane, and $\omega>0$.  $F$ may, in
addition, depend parametrically on $\omega$.  In most of the
following, we will not write out this parametric dependence explicitly
except where it matters for a precise statement of the quadrature
error estimate.
Classical quadrature formulas require that the number
of integration nodes grows linearly in the frequency $\omega$, so that
the problem becomes increasingly intractable when the frequency is
large.

One of the earliest integration methods for integrals of this type is
due to Filon \cite{Filon}, who studied the special case
\begin{equation}
  F( x,\e^{\i \omega x}; \omega)
  = F( x,\e^{\i \omega x})
  = f(x) \, \e^{\i \omega x} \,.
\end{equation}
Filon replaced the function $f$ by a polynomial approximation so that
the resulting moment integrals could be computed analytically.  The
method has been refined and extended by many authors
\cite{Flinn,Shampine:2013:EfficientFM,VVoorenAndLinde}.  Other methods
use interpolatory formulas and formulas which are based on the
integration between the zeros of $\cos(\omega x)$ and $\sin(\omega x)$
\cite{Longman,Miklosko,PiessensP:1971:NumericalMI}.

Most subsequent work went into oscillatory integrals of the form
\begin{equation}\label{HOIg}
  I(\omega) = \int_0^1 f(x) \, \e^{\i \omega g(x)} \, \d x 
\end{equation}
which is a more subtle problem when the phase function $g$ has
stationary points.  
Levin \cite{Levin} suggested to convert the integrand into a perfect
derivative.  He seeks a function $p$ satisfying
\begin{equation}
  \frac{\d}{\d x} \bigl( p(x) \, \e^{\i \omega g(x)} \bigr)
  = f(x) \, \e^{\i \omega g(x)} \,,
\end{equation}
a differential equation which can be solved by collocation.  The value
for the integral is then recovered via
\begin{equation}
  I(\omega)
  = \int_0^1 f(x) \, \e^{\i \omega g(x)} \, \d x
  = p(0) \, \e^{\i \omega g(0)} - p(1) \, \e^{\i \omega g(1)} \,.
\end{equation}

Olver \cite{Olver:2007:MomentFN} suggest a choice of approximation
basis for $f$ which is compatible with integration against $\e^{\i
\omega g(x)}$ so that Filon-type ideas can be extended to problem
\eqref{HOIg}.

A third approach is based on asymptotic expansion in inverse powers of
the frequency.  Noting that
\begin{equation}\label{IserlesNorsett}
  I(\omega)
  \sim \sum_{k = 0}^{p-1} \frac{1}{(-\i \omega)^{k+1}} \,
    \biggl(
      \frac{\e^{\i \omega g(1)}}{g'(1)} \, f_m(1)
      - \frac{\e^{\i \omega g(0)}}{g'(0)} \, f_m(0)
    \biggr) \,,
\end{equation}
where 
\begin{equation}
  f_0(x) = f(x)
  \quad \text{and} \quad
  f_{m+1}(x) = \frac{\d}{\d x} \frac{f_m(x)}{g'(x)} \,,
\end{equation}
one can show that the error in \eqref{IserlesNorsett} is
$\O(\omega^{-p-1})$, so the method is accurate so long as $\omega$ is
large.  Iserles and N{\o}rsett
\cite{IserlesAndNorsett,IserlesAndNorsett2} modify
\eqref{IserlesNorsett} as to not require the computation of
derivatives at the endpoint while producing errors comparable to other
asymptotic and Filon-type methods.  For reviews of available methods
and further references, see
\cite{DeanoHI:2018:ComputingHO,HuybrechsO:2009:HighlyOQ}.

None of the methods mentioned so far, however, extends to
\eqref{MainProb} in the general case, i.e., without exploiting a
particular form of the function $F$.  We encountered integrals of this
form when extending uniformly accurate exponential integrators for the
Klein--Gordon equation in the non-relativistic limit, first suggested
by Baumstark \emph{et al.}\ \cite{BaumstarkFS:2018:UniformlyAE}, to
problems with more general nonlinearities
\cite{MohamadO:2019:UniformlyAT}.

In this paper, we derive a uniformly accurate quadrature scheme that
is completely ``black-box'', i.e., can be applied to any function $F$
without $F$-specific pre-computations.  It is based on Gauss
quadrature for sums detailed in Section~\ref{s.summation} below.  We
show that the quadrature error is exponentially small in $n$ when $F$
is analytic.

To motivate our approach, let  $T = 2\pi /\omega$ denote the period of
$x\mapsto \e^{\i \omega x}$.  Then there exist $N \in \N$ and  $\alpha
\in [0, 1)$ such that
\begin{equation}
  (N + \alpha) \, T = 1 \,.
\end{equation}
Let now $x_j$ be the $N+1$ equidistant points
\begin{equation}
  x_j = -1 + \frac{2 j}{N-1}, \qquad 0 \leq j \leq N  \,.
\end{equation}
We can write
\begin{align}
  I(\omega)
  & = \sum_{j=0}^{N-1} \int_{j T}^{(j+1)T} F(x,\e^{\i \omega x}) \, \d x
      + \int_{NT}^{(N+\alpha)T} F(x,\e^{\i \omega x}) \, \d x
      \notag\\
  & = T \sum_{j=0}^{N-1} \int_0^1 F(T(t + j), \e^{2 \pi \i t}) \, \d t
      + T \int_0^\alpha F(T(t + N), \e^{2 \pi \i t}) \, \d t
      \notag \\
  & = T \sum_{j=0}^{N-1} I_1(x_j) + T \, I_\alpha (x_N) 
  \label{IntegTrans}
\end{align}
with
\begin{equation}
  I_b(y) = \int_0^b F \bigl( Tt + \tfrac12 \, T \, (N-1) (y+1),
    \e^{2 \pi \i t} \bigr) \, \d t \,.
  \label{e.inner}
\end{equation}
When $F$ depends parametrically on $\omega$, $I_b$ inherits this
parametric dependence.  Importantly, \eqref{e.inner} shows that
$I_b(y)$ is otherwise independent of $\omega$ so that, for fixed $y$,
each $I_b(y)$ can be evaluated easily via any traditional quadrature
rule; errors are uniform in $\omega$ as all derivatives of the
integrand are uniform in $\omega$.  Moreover, $I_1(y)$ varies slowly
as a function of $y$.  Thus, the sum on the right hand side of
\eqref{IntegTrans} could be seen as a Riemann sum,
\begin{equation}
  2T \sum_{j=0}^{N-1} I_1(x_j)
  =  \int_{-1}^1 I_1(y) \, \d y + O(\omega^{-1}) \,,
  \label{e.riemannsum}
\end{equation}
where the right hand integral could, again, be approximated by any
traditional quadrature rule.  Since $NT < 2 \pi$, this approximation
is uniform in $\omega$.

The resulting method would be efficient and has an error that is
asymptotically small for large $\omega$.  However, it turns out that
we can do even better, by-passing the Riemann sum approximation
\eqref{e.riemannsum} with its $O(\omega^{-1})$-error entirely: Sums
with a slowly varying summand can be evaluated effectively via Gauss
quadrature for sums with a small number of evaluations, just like
Gauss quadrature for integrals.  Gauss quadrature for sums has been
described by Area \emph{et al.}\
\cite{AreaDG:2014:ApproximateCS,AreaDG:2016:ApproximateCS} but, to the
best of our knowledge, has never been applied in the context of
oscillatory integrals.

The remainder of the paper is structured as follows.  Gauss quadrature
for sums is detailed in Section~\ref{s.summation}, leading to a
complete statement of the algorithm.  Section~\ref{s.convergence}
gives a simple estimate for the quadrature error.  Finally, in
Section~\ref{s.numerics}, we demonstrate that the method is easy to
implement and performs well.

\section{Gauss quadrature for sums}
\label{s.summation}

Let $N$ be a positive integer, arbitrary but fixed in the following.
Then there exists a unique quadrature formula
\begin{equation}
  \label{GramGaussQuad}
  S(G) \equiv \frac{2}{N} \sum_{j=0}^{N-1}G(x_j)
  \approx \sum_{k=1}^n w_{k} \, G(s_{k})
  \equiv S_n(G) \,,  
\end{equation}
which is exact for all polynomials of degree $ \leq 2n-1$.

The construction uses so-called Gram polynomials $p_m$,
$m = 0, \dots, N-1$, which are defined, up to choice of sign, by their
orthonormality with respect to a discrete equidistant sum, namely
\begin{equation}
  \sum_{j=0}^{N-1} p_l(x_j) \, p_m(x_j) = \delta_{lm} \,.
\end{equation}
For fixed $n<N$, the quadrature nodes $\{s_k\}$ are the zeros of the
Gram polynomial of degree $n$.  Then
\begin{equation}
  q_k(x) = \frac{p_n(x)}{x-s_k} - \frac{a_n}{a_{n-1}} \, p_{n-1}(x)
\end{equation}
is a polynomial  of degree $n-2$, where $a_m$ denotes the leading
coefficient of $p_m$.

For any polynomial $p$ of degree $\leq 2n-1$ that vanishes at all the
nodes $s_l$ except for $s_k$, \eqref{GramGaussQuad} implies that
\begin{equation}
  w_k = \frac{2}{N p(s_k)}\sum_{j=0}^{N-1}p(x_j) \,.
\end{equation}
Taking
\begin{equation}
  p(x) = \frac{p_n(x) \, p_{n-1}(x)}{x - s_k} \,,
\end{equation}
in particular, we obtain
\begin{equation}
  w_k = \frac{2}{N \, p'_n(s_k) \, p_{n-1}(s_k)}
    \sum_{j=0}^{N-1}\frac{p_n(x_j) \,p_{n-1}(x_j)}{x_j - s_k} \,.
\end{equation}
Since $q_k$ is of degree $n-2$, it is orthogonal to $p_{n-1}$.  We
conclude that
\begin{equation}
  w_k = \frac{a_n}{a_{n-1}} \, \frac{2}{N\,p'_n(s_k) \, p_{n-1}(s_k)} \,.
\end{equation}

The Gram polynomials $p_n$ can be expressed in closed form in terms of
the hypergeometric function ${}_3F_2$ by
\begin{equation}
  p_n(x) = (-1)^n \, \sqrt{\frac{(2n +1) \, (N-n )_n}{(N)_{n+1}}} \,
  {}_3F_2 \biggl( \begin{matrix} -n,& n+1, & (1-N)(1+x)/2\\  & 1,&
  1-N \end{matrix} \,\bigg\vert\,  1 \biggr) \,,
\end{equation}
\cite[Equations 7.13.7 and 7.13.15]{Hildebrand}, 
with Pochhammer symbol defined by
\begin{equation}
  (A)_0 = 1 \,, \quad
  (A)_n = A (A+1)(A+2)\cdots (A+n-1) \text{ for } n \in \N^\ast \,.
\end{equation}
By expanding the finite series representation of ${}_3F_2$, we find
that the leading order coefficient is given by
\begin{equation}
  a_n = \sqrt{\frac{(2n+1)(N-n-1)!}{(N+n)!}} \,
    \frac{(2n)! \, (N-1)^n}{2^n \, (n!)^2}
\end{equation}
so that
\begin{equation}
  \frac{a_n}{a_{n-1}} = \frac{N-1}{n} \,
    \sqrt{\frac{4n^2 -1}{N^2 - n^2}} \,.
\end{equation}
For details, see \cite[p.~348]{Hildebrand} and
\cite[p.~170]{OrtPolDisVar}.  We note that the expressions in
\cite{AreaDG:2014:ApproximateCS,AreaDG:2016:ApproximateCS} differ from
the ones given here due to the different choice of nodes in the
definition of the discrete inner product \eqref{e.inner}.

Applying the Gauss summation formula \eqref{GramGaussQuad} to
\eqref{IntegTrans}, we obtain the final quadrature approximation
\begin{equation}
  I_\text{comp}(\omega;n)
  = \frac{NT}2 \sum_{k=1}^{n} w_k \, I_1(s_k)
    + T \, I_\alpha(x_N) \,.
  \label{e.In}
\end{equation}

\section{Convergence analysis}
\label{s.convergence}

In the following, we use the Chebyshev approximation to quantify the
error of the Gauss quadrature formula for sums.  To fix notation, let
$G$ be a continuous function on $[-1, 1]$.  We write
\begin{equation}
  G_n(x) = \sum_{j = 0}^n a_j \, T_j(x)
\end{equation}
to denote its polynomial approximation of degree $n$ obtained by
truncating the Chebyshev series at order $n$.  Here,
$T_j(x) = \cos(j \arccos (x))$ is the Chebyshev polynomial of degree
$j$ and the coefficients are given by
\begin{subequations}
\begin{gather}
  a_0 = \frac{1}{\pi}\int_{-1}^1 \frac{G(x)}{\sqrt{1-x^2}} \, \d x \,, \\
  a_j = \frac{2}{\pi}\int_{-1}^1
    \frac{G(x) \, T_j(x)}{\sqrt{1-x^2}} \, \d x \text{ for } j \geq 1 \,.
\end{gather}
\end{subequations}
We write $\lVert \, \cdot \, \rVert$ to denote the supremum norm on
$[-1,1]$ and define
\begin{gather}
  d_n = \lVert G - G_n \rVert \,.
\end{gather}

\begin{proposition}\label{PropGsGmEr}
Let $G \in \C([-1,1])$ and $S$ and $S_n$ be defined as in
\eqref{GramGaussQuad}.  Then
\begin{equation}\label{GsGmEstim1}
  \lvert S(G) - S_n(G) \rvert \leq 4 \, d_{2n-1} \,.
\end{equation}
\end{proposition}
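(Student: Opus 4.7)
The plan is to use the standard ``exactness on polynomials plus best polynomial approximation'' argument familiar from the analysis of Gauss quadrature for integrals, adapted to the discrete sum setting.

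First, I would observe that both $S$ and $S_n$ reproduce polynomials of degree $\leq 2n-1$: $S_n$ does so by construction (it is exact on such polynomials), and $S$ does so trivially since it is an exact evaluation. In particular, for the Chebyshev truncation $G_{2n-1}$ we have $S(G_{2n-1}) = S_n(G_{2n-1})$, so
\begin{equation*}
  S(G) - S_n(G) = S(G - G_{2n-1}) - S_n(G - G_{2n-1}),
\end{equation*}
and I would bound the two terms on the right separately by the triangle inequality.

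The bound on the first term is immediate: since $S$ is a positive average with total mass $S(1) = \frac{2}{N} \sum_{j=0}^{N-1} 1 = 2$, we get $|S(G - G_{2n-1})| \leq 2 \, \|G - G_{2n-1}\| = 2 d_{2n-1}$. For the second term I need an analogous ``total mass $2$'' estimate for $S_n$, which requires (i) that $\sum_{k=1}^{n} w_k = 2$ and (ii) that each $w_k \geq 0$. Claim (i) follows by applying exactness of $S_n$ on degree $\leq 2n-1$ polynomials to the constant function $1$. For (ii), I would use the classical trick: the polynomial $p(x) = (p_n(x)/(x - s_k))^2$ has degree $2n-2 \leq 2n-1$, is nonnegative on $\{x_0,\dots,x_{N-1}\}$, and vanishes at every node $s_l \neq s_k$, so exactness gives
\begin{equation*}
  w_k \, p_n'(s_k)^2 = w_k \, p(s_k) = S_n(p) = S(p) \geq 0,
\end{equation*}
and $p_n'(s_k) \neq 0$ because the zeros of $p_n$ are simple (as is standard for orthogonal polynomials). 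Hence $w_k \geq 0$, and combined with (i) we obtain $|S_n(G - G_{2n-1})| \leq 2 d_{2n-1}$.

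Adding the two bounds produces $|S(G) - S_n(G)| \leq 4 d_{2n-1}$. The only nontrivial step is the positivity of the weights; everything else is bookkeeping. I would expect the proof to be a matter of a few lines once the positivity argument is stated, and there is no need for delicate estimates on the Gram polynomials beyond the fact that their zeros are simple and lie in $[-1,1]$ (which is standard for polynomials orthogonal with respect to a positive discrete inner product, and in any case is used implicitly in evaluating $S_n$ in the first place).
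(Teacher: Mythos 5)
Your proof is correct and follows essentially the same route as the paper: split off the degree-$(2n-1)$ Chebyshev truncation using exactness, then bound $S$ and $S_n$ separately using that each has total mass $2$ with non-negative weights. The only difference is that you supply a self-contained proof of the weight positivity via the classical $\bigl(p_n(x)/(x-s_k)\bigr)^2$ trick and derive $\sum_k w_k = 2$ from exactness on constants, whereas the paper simply cites these facts; your argument is sound and, if anything, more complete.
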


\begin{proof}
As \eqref{GramGaussQuad} is exact for polynomials of degree
$\leq 2n-1$, we have $(S-S_n)(G) = (S-S_n)(G-G_{2n-1})$.  Hence,
\begin{align}
  \lvert S(G) - S_n(G) \rvert
  & \leq \lvert S(G-G_{2n-1}) \rvert + \lvert  S_n(G-G_{2n-1}) \rvert
    \notag \\
  & \leq 2 \, d_{2n-1}
    + \sum_{k=1}^n \, \lvert w_{k} \rvert \,  d_{2n-1} \,.
\end{align}
Since the weights are non-negative \cite{OrtPolDisVar} and formula
\eqref{GramGaussQuad} is interpolatory,
\begin{equation}
  \sum_{k=1}^n \, \lvert w_{k} \rvert = 2
\end{equation}
which implies \eqref{GsGmEstim1}.
\end{proof}

When $G$ is smooth, the error of the Chebyshev approximation
satisfies the following strong bounds.
\begin{theorem}[{\cite[Theorem 4.3]{Trefethen:2008:GaussQB}}]\label{ThmdnEstim}
Let $G \in \C([-1,1])$ be such that $G, G', \dots, G^{(m-1)}$ are
absolutely continuous and
\begin{equation}
  \left\| \frac{G^{(m)}}{\sqrt{1-x^2}} \right\|_1
  \equiv V < \infty
\end{equation}
for some $m\geq 1$.  Then, for every $n \geq m+1$,
\begin{equation}
  d_n \leq \frac{2V}{\pi \, m \, (n-m)^m} \,.
\end{equation}
Moreover, if $G$ is analytic with $|G(z)| \leq M$ in the region
bounded by the ellipse with foci $\pm 1$ and major and minor semiaxis
lengths summing to $\rho > 1$, then for every $n \geq 0$,
\begin{equation}
  d_n \leq \frac{2 M}{(\rho-1) \, \rho^n} \,.
\end{equation}
\end{theorem}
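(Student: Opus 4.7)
The plan is to reduce both statements to estimates on individual Chebyshev coefficients $a_j$ and then sum the tail, since $d_n \le \sum_{j>n} \lvert a_j\rvert$. The two parts differ only in how one bounds $\lvert a_j\rvert$.

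For the algebraic estimate, I would substitute $x=\cos\theta$ and rewrite the Chebyshev coefficient as a Fourier cosine coefficient,
\begin{equation}
  a_j = \frac{2}{\pi}\int_0^{\pi} G(\cos\theta)\,\cos(j\theta)\,\d\theta\,,
\end{equation}
and integrate by parts $m$ times in $\theta$. The boundary terms vanish by periodicity of the trigonometric factors, and the chain rule each time produces a factor of $\sin\theta = \sqrt{1-x^2}$ combined with one derivative of $G$ in the variable $x$; the absolute continuity assumption is exactly what is needed to justify the successive integrations by parts. After $m$ steps one obtains an expression of the form $j^{-m}$ times an integral whose absolute value is controlled by $\lVert G^{(m)}/\sqrt{1-x^2}\rVert_1 = V$ (the remaining power of $\sin\theta$ in the numerator cancels against the $\sqrt{1-x^2}$ in the denominator that appears after changing variables back to $x$). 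This yields $\lvert a_j\rvert \le 2V/(\pi j^m)$ for $j\ge 1$. Summing from $j=n+1$ to $\infty$ and comparing with $\int_{n-m}^\infty t^{-m}\,\d t = 1/(m(n-m)^{m-1})$, after sharpening by one factor of $n-m$ because the sum is strictly below the integral starting one step earlier, gives the stated bound $d_n \le 2V/(\pi m (n-m)^m)$.

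For the analytic estimate, the standard tool is the Joukowski parametrization $z = \tfrac12(w + w^{-1})$, which maps the circle $\lvert w\rvert = \rho$ bijectively onto the Bernstein ellipse $E_\rho$ with foci $\pm 1$ and semiaxis sum $\rho$. Under this map $T_j(z) = \tfrac12(w^j + w^{-j})$, and the weight $\sqrt{1-x^2}\,\d x$ on $[-1,1]$ pulls back to a measure on $E_\rho$ that corresponds to the uniform measure $\d\theta$ on $\lvert w\rvert = \rho$. Deforming the contour in the integral definition of $a_j$ from $[-1,1]$ to $E_\rho$ (which is permissible since $G$ is analytic and bounded by $M$ in between), I obtain a representation of $a_j$ as an average over $\lvert w\rvert = \rho$ of $G(z(w))$ against $w^{\pm j}$. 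Bounding $\lvert G\rvert\le M$ and the oscillatory factor by $\rho^{-j}$ from the dominant branch $w^{-j}$ yields $\lvert a_j\rvert \le 2M\rho^{-j}$. Summing the geometric tail,
\begin{equation}
  d_n \le \sum_{j=n+1}^{\infty} \lvert a_j\rvert \le 2M\sum_{j=n+1}^{\infty}\rho^{-j} = \frac{2M}{(\rho-1)\rho^n}\,.
\end{equation}

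The main obstacle is the smooth case: one must carefully track the $\sqrt{1-x^2}$ weight through $m$ integrations by parts, because the naive estimate in $\theta$ produces factors of $\sin\theta$ that must be absorbed correctly into the hypothesis $\lVert G^{(m)}/\sqrt{1-x^2}\rVert_1 < \infty$ rather than the stronger $\lVert G^{(m)}\rVert_1$; this is the reason the weighted $L^1$ norm is the correct quantity on the right-hand side. The analytic case is comparatively routine once the contour deformation and the $T_j$-on-the-ellipse identity are in hand.
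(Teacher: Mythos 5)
First, a point of comparison: the paper itself gives no proof of this statement --- it is quoted directly from Trefethen's 2008 SIAM Review article as Theorem 4.3 there --- so there is no in-paper argument to measure your proposal against. Your overall strategy (bound the individual Chebyshev coefficients $a_j$ and sum the tail, treating the smooth and analytic cases separately) is exactly the strategy of the cited source, and your analytic half is essentially sound: the Joukowski/Bernstein-ellipse contour deformation gives $\lvert a_j\rvert\le 2M\rho^{-j}$, and the geometric tail sums to $2M/((\rho-1)\rho^n)$.

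The algebraic half, however, contains a genuine gap. From $m$ integrations by parts you claim $\lvert a_j\rvert\le 2V/(\pi j^m)$ and then assert that the tail sum is bounded by $2V/(\pi m(n-m)^m)$ ``after sharpening by one factor of $n-m$ because the sum is strictly below the integral.'' That step is not valid: $\sum_{j>n}j^{-m}$ is of order $n^{1-m}/(m-1)$, a full power of $n$ larger than the claimed $n^{-m}/m$, and no sum-versus-integral comparison can recover an entire power. (Also, $\int_{n-m}^{\infty}t^{-m}\,\d t = 1/((m-1)(n-m)^{m-1})$, not $1/(m(n-m)^{m-1})$.) The failure is starkest at $m=1$: your chain of inequalities reads $d_n\le\sum_{j>n}2V/(\pi j)=\infty$, while the theorem asserts the finite bound $2V/(\pi(n-1))$. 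The missing idea is that the integration by parts must be organized through the exact coefficient recurrence $2j\,a_j(G)=a_{j-1}(G')-a_{j+1}(G')$, which follows from $\sin\theta\,\sin(j\theta)=\tfrac12\bigl(\cos((j-1)\theta)-\cos((j+1)\theta)\bigr)$ and is where the weight $1/\sqrt{1-x^2}$ reappears. Iterating this recurrence produces a denominator that is a product of decreasing factors $j(j-1)\cdots$ rather than a bare power $j^m$, and the resulting tail sum telescopes exactly; that telescoping is the source of both the extra factor of $(n-m)$ and the constant $1/m$ in the stated bound. Relatedly, your bookkeeping of the raw repeated integration by parts is too optimistic: differentiating $G'(\cos\theta)\sin\theta$ in $\theta$ produces not only $G''(\cos\theta)\sin^2\theta$ but also the lower-order term $G'(\cos\theta)\cos\theta$, so the claim that each step simply yields one more derivative of $G$ and a factor $j^{-1}$ does not hold without passing to the recurrence formulation.
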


Applying Proposition~\ref{PropGsGmEr} and Theorem~\ref{ThmdnEstim} to
the function $G(y)=I_1(y;\omega)$ directly yields the following error
estimate for the oscillatory quadrature.

\begin{theorem}\label{mainThm}
Fix $\omega_0\geq 4 \pi$ and $m \in \N$.  Let
$F \colon [0,1] \times \mathbb U \to \CC$ be continuous.  Assume
further that the $m-1$ first derivatives of $I_1(y;\omega)$ defined in
\eqref{e.inner} are absolutely continuous on $[-1, 1]$ and that there
exists a constant $V$ such that
\begin{equation}
  \left\| \frac{I_1^{(m)}(\,\cdot\,;\omega)}{\sqrt{1-y^2}} \right\|_1
  \leq V
\end{equation}
uniformly with respect to  $\omega \geq \omega_0$.  Then, for every
$n \geq m/2 +1$,
\begin{equation}
 \biggl \lvert I(\omega) - \frac{NT}2 \sum_{k=1}^n w_k I_1(s_k)
  - I_\alpha(x_N)\biggr \rvert
  \leq \frac{4  V}{m \, (2n-1-m)^m} \,.
  \label{e.algebraic}
\end{equation}
Moreover, if for some $\rho >1$ the function $I_1(y)$ is analytic with
$|I_1(y)| \leq M$ in the region bounded by the ellipse with foci
$\pm 1$ and major and minor semiaxis lengths summing to $\rho > 1$,
uniformly in $\omega \geq \omega_0$, then for every $n \geq 0$,
\begin{equation}
  \biggl\lvert I(\omega) - \frac{NT}2
  \sum_{k=1}^n w_k I_1(s_k) - I_\alpha(x_N) \biggr\rvert  \leq  
  \frac{4 M}{(\rho-1) \, \rho^{2n-1}} \,.
  \label{e.exponential}
\end{equation}
\end{theorem}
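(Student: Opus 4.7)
The plan is to reduce the claim to Proposition~\ref{PropGsGmEr} and Theorem~\ref{ThmdnEstim} applied to the function $y \mapsto I_1(y;\omega)$ on $[-1,1]$. First, I would match the composite formula \eqref{e.In} against the exact decomposition \eqref{IntegTrans}. Because the definition of $S$ in \eqref{GramGaussQuad} gives $T\sum_{j=0}^{N-1} I_1(x_j) = \tfrac{NT}{2}\,S(I_1)$ and the boundary contribution $T\,I_\alpha(x_N)$ appears identically in both expressions, the quadrature error collapses to
\[
  I(\omega) - I_{\text{comp}}(\omega;n) = \frac{NT}{2}\,\bigl(S(I_1) - S_n(I_1)\bigr).
\]

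Next, I would apply Proposition~\ref{PropGsGmEr} to $G(y)=I_1(y;\omega)$ to obtain $|S(I_1)-S_n(I_1)| \leq 4\,d_{2n-1}$, where $d_{2n-1}$ is the Chebyshev truncation error of $y\mapsto I_1(y;\omega)$ at order $2n-1$. Since $(N+\alpha)T=1$ with $\alpha \in [0,1)$, one has $NT \leq 1$, and the assumption $\omega \geq \omega_0 \geq 4\pi$ additionally guarantees $N \geq 2$, so a nondegenerate summation formula is available for the relevant range of $n$.

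Finally, I would invoke Theorem~\ref{ThmdnEstim} in each smoothness regime, with the index $2n-1$ substituted for $n$. The absolutely continuous case is admissible exactly when $2n-1\geq m+1$, which matches the hypothesis $n\geq m/2+1$, and yields $d_{2n-1} \leq 2V/(\pi m (2n-1-m)^m)$; combined with the prefactor $NT/2 \leq 1/2$ and the fact that $2/\pi \leq 1$, this proves \eqref{e.algebraic}. The analytic case proceeds identically, starting from $d_{2n-1}\leq 2M/((\rho-1)\rho^{2n-1})$ and using the same prefactor reduction to obtain \eqref{e.exponential}.

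The whole argument is essentially bookkeeping once the two prior results are in hand; no step is technically difficult. The only conceptual point to track is that the integrability/analyticity hypotheses on $I_1(\cdot;\omega)$ are assumed uniform in $\omega \geq \omega_0$ through the constants $V$ and $M$, which is precisely what is needed to conclude that the resulting quadrature error is uniform in the frequency.
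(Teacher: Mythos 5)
Your proposal is correct and follows exactly the route the paper takes: the paper's entire proof is the one-line remark that applying Proposition~\ref{PropGsGmEr} and Theorem~\ref{ThmdnEstim} to $G(y)=I_1(y;\omega)$ (with $2n-1$ in place of $n$, using $NT\leq 1$ and $1/\pi\leq 1$ to absorb the constants) directly yields the stated bounds. Your identification of the error as $\tfrac{NT}{2}\bigl(S(I_1)-S_n(I_1)\bigr)$ and the index condition $2n-1\geq m+1 \Leftrightarrow n\geq m/2+1$ is precisely the intended bookkeeping.
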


\begin{remark}
The assumption $\omega_0 \geq 4 \pi$ ensures that $N \geq 2$ so that
the Gram polynomials are well defined.  When $\omega<4 \pi$,
$x\mapsto F(x, \e^{i\omega x})$ is not highly oscillatory so that
classical methods are applicable.
\end{remark}

\begin{remark} \label{r.sufficient}
It is possible to formulate sufficient conditions which directly refer
to $F$.  Since
\begin{align}
  \left\| \frac{I_1^{(m)}}{\sqrt{1-y^2}} \right\|_1
  & \leq \frac{1}{2^{m+1} \pi } \int_{-1}^1 \int_0^{2\pi} \frac{| \partial^m_x
           F \bigl( \omega^{-1} s+ \tfrac12 \, T \, (N-1) (y+1),
          \e^{\i s} ; \omega \bigr)|}{\sqrt{1-y^2}} \, \d s \, \d y
     \notag \\
  & \leq \frac{\pi }{2^m} \,
         \sup_{x, z} \left| \partial^m_x F(x, z; \omega) \right| ,
    \label{e.sufficient}
\end{align}
estimate \eqref{e.algebraic} holds whenever the first $m$
$x$-derivatives of $F$ are uniformly bounded with respect to $x$, $z$,
and $\omega$.  Likewise, estimate \eqref{e.exponential} holds whenever
$F$ is analytic in its first argument with a radius of analyticity
that is uniform with respect to $x$, $z$, and $\omega$.  However,
Theorem~\ref{mainThm} as stated is stronger because $I_1(y; \omega)$
may be uniformly analytic even if $F$ is not uniformly analytic in its
first argument, as the example given in the next section shows.
Moreover, estimate \eqref{e.sufficient} for $V$ and analogous
estimates for $M$ will generally over-estimate the constants.
\end{remark}

\section{Implementation and numerical test}
\label{s.numerics}
In the discussion above, we have not specified a quadrature rule for
the ``inner integrals'' \eqref{e.inner}.  The quadrature error there
depends on the smoothness of $F$ in both arguments (in fact, more
strongly on the second).  Since the inner quadrature is always over a
full period of sine and cosine functions, the required number of
quadrature points is typically larger, but not excessively larger,
than the number of quadrature nodes for the outer sum.

We consider the example
\begin{equation}
  F(x,\e^{\i \omega x}; \omega, a)
  = \frac{2 x- \omega\sin(\omega x)}%
         {2\sqrt{a+x^2 + \cos(\omega x)}}
\end{equation}
with $a \geq 1$.  Here, standard quadrature libraries fail or perform
increasingly poorly when $\omega$ becomes large.  On the other hand,
the exact value of the integral can be computed directly, it is
\begin{equation}
  I_{\text{exact}}(\omega;a)
  = \sqrt{a+1 +\cos(\omega)} - \sqrt{a +1} \,.
\end{equation}
Moreover, the inner integral \eqref{e.inner} can also be computed
explicitly:
\begin{align}
  & I_1(y; \omega,a)
  = \frac{1}{4 \pi} \int_0^{2 \pi}
    \frac{2 \pi  (N-1)(y+1) +2 s- \omega^2\sin(s)}%
         {\sqrt{a \omega^2 + (\pi  (N-1)(y+1) +  s)^2
          + \omega ^2 \cos(s)}} \, \d s
    \notag \\ 
  & = \frac{2 \pi \, ((N -1)\, y + N)}{
       \sqrt{(a+1) \omega^2 +\pi^2 (N-1)^2 (y+1)^2}
       + \sqrt{(a+1) \omega^2 + \pi^2 ((N-1) y+N+1)^2}} \,.
\end{align}
Since $N=O(1/\omega)$, it is obvious that $I_1$ is uniformly bounded
on its domain of analyticity and estimate \eqref{e.exponential} of
Theorem~\ref{mainThm} applies.

Note, however, that when $a = 1$, the pole asymptotic to
$x=-\pi/\omega$ approaches the interval of integration as
$\omega \to \infty$, so that the sufficient conditions of
Remark~\ref{r.sufficient} are not satisfied.  This is reflected by the
fact that a Gaussian quadrature of fixed order performs poorly on the
inner integral---in a small region near $s=\pi$, corresponding to
$t=1/2$, the inner integrand develops steep gradients as $\omega$
becomes large---but standard adaptive quadrature implementations have
no difficulty dealing with this case and perform well.  In our example
implementation, we use a binding to the well known \textsc{quadpack}
Fortran library \cite{PiessensDU:1983:QUADPACK}.

\begin{figure}
\centering
\includegraphics[width=0.8\textwidth]{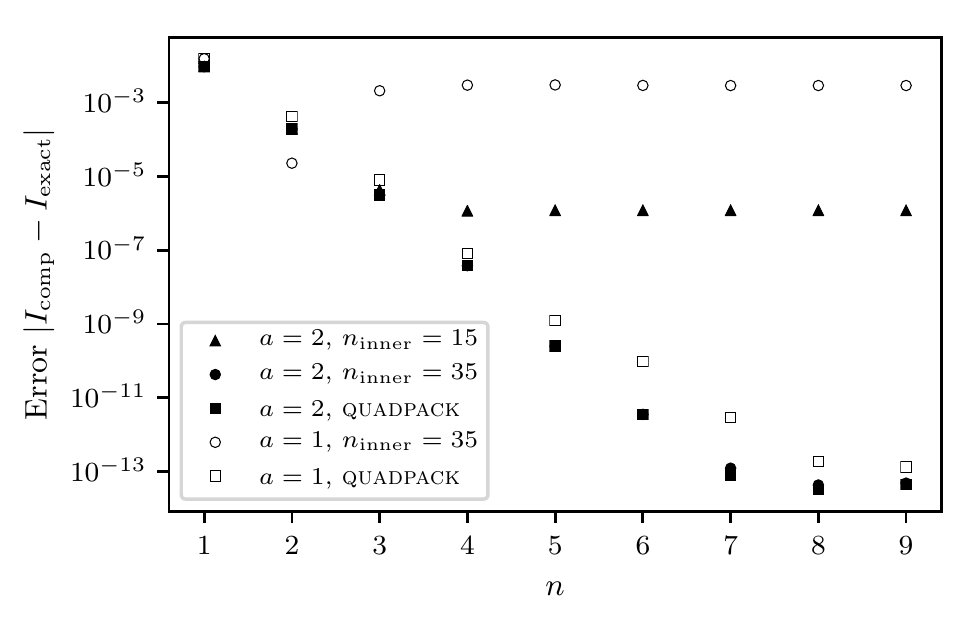}
\caption{Scaling of the error with the number of the outer Gauss
quadrature notes $n$.  We compare different schemes for the inner
quadrature for the case when $F$ is uniformly analytic ($a=2$, filled
marker symbols) with the case when uniform analyticity fails and
non-adaptive inner Gauss quadrature struggles ($a=1$, empty marker
symbols).  In this example, $\omega = 10^4$ is fixed.}
\label{f.1}
\end{figure}

\begin{figure}
\centering
\includegraphics[width=0.8\textwidth]{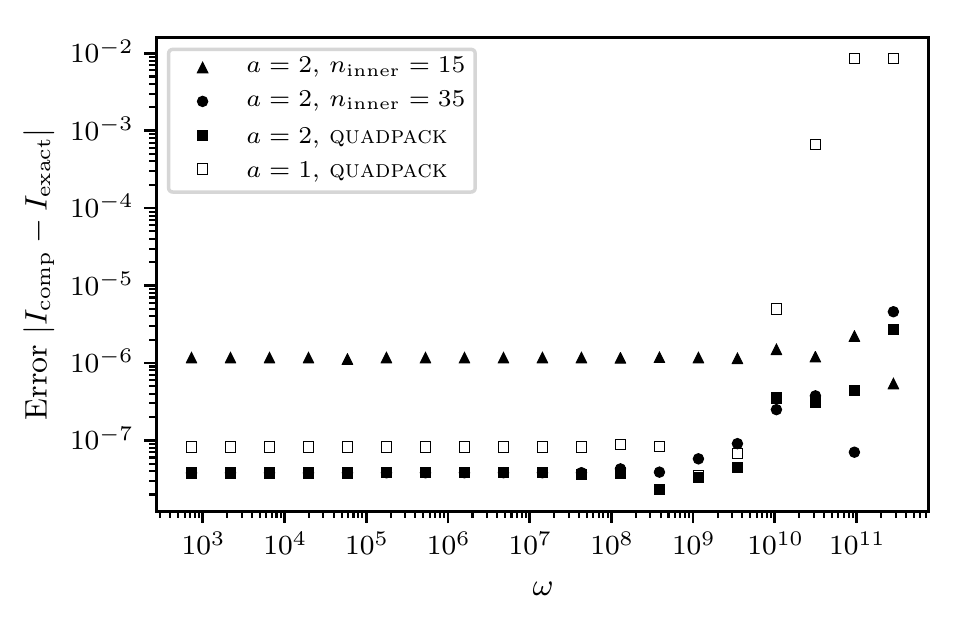}
\caption{Demonstration of the uniformity of the Gauss summation scheme
with respect to the fast frequency $\omega$.  For very large values of
$\omega$, accuracy is necessarily lost due to the loss of significant
digits in the evaluation of the trigonometric functions in
double-precision floating point.  In this example, the number of outer
Gauss quadrature nodes is fixed at $n=4$.}
\label{f.2}
\end{figure}

Figure~\ref{f.1} compares the scaling of the error of our oscillatory
quadrature rule with the order of the Gauss summation $n$ for
different choices of the inner quadrature.  When $a=1$ and the inner
integrand is not uniformly analytic, only an adaptive inner quadrature
performs well.  When $a=2$, uniform analyticity holds and the inner
integral can be calculated effectively by a moderate order classical
Gauss quadrature ($n_{\text{inner}} \geq 35$ gives errors comparable
to errors achievable with \textsc{quadpack}) .  Figure~\ref{f.2}
illustrates the uniformity of the error as a function of $\omega$.

We note that the Gauss summation nodes $s_k$ and weights $w_k$ depend
on $\omega$, so they must be re-computed whenever $\omega$, hence $N$,
is changed.  The Gram polynomials themselves are polynomials of degree
$n$ with coefficients which, up to normalization, are polynomials in
$N$ of degree $n$.  Thus, the polynomial data can be pre-computed and
stored in an integer array of size $n^2$ and evaluated in $O(n^2)$
operations.  The roots are found with the
Weierstrass--Dochev--Durand--Kerner algorithm which is known to
converge rapidly for Gram polynomials
\cite{AreaDG:2016:ApproximateCS}.  Since the classical Gauss
quadrature points---the continuum limit of Gauss summation---provide a
good initial guess, this algorithm reaches excellent accuracy in a
small number of iterations which is uniform in $N$.  Moreover, all
$N$-dependent terms need to be evaluated only once, so that the
overall complexity of the root finding step remains at $O(n^2)$.  In
our example implementation, provided as supplementary material to the
manuscript, we use a symbolic mathematics package for all polynomial
manipulations.  This adds some run-time overhead but leads to a
transparent and still reasonably fast implementation.

The complexity of the overall quadrature formula is the complexity of
the evaluation of the weights, which can be done at $O(n^2)$ as all
$N$-dependent terms need to be evaluated only once, times
$n_{\text{inner}}$, the complexity of the inner quadrature, which is
problem-dependent as discussed above.  If several integrals with the
same frequency $\omega$ are performed, the quadrature weights can be
precomputed and the complexity per evaluation drops to
$O(n \cdot n_{\text{inner}})$.  Also, the required number of function
evaluations is always $n \cdot n_{\text{inner}}$.  Since, in many
cases, order $n=6$ is already very accurate and order $n=10$ is mainly
limited by the floating point error, and provided the inner
integration is sufficiently well-behaved, the method is very effective
in practice.

\section*{Acknowledgments}

The work was supported by German Research Foundation grant OL-155/6-2.
MO further acknowledges support through German Research Foundation
Collaborative Research Center TRR 181 under project number 274762653.

\bibliographystyle{siam}
\bibliography{kg.bib}

\end{document}